\documentclass[11pt,english,12p]{amsart}
\linespread{1.2}
\usepackage{amsmath}
\usepackage{amssymb}
\usepackage{tikz}
\usepackage[autostyle]{csquotes}
\usepackage{mathrsfs}
\usepackage{float}
\usepackage{tikz-cd} 
\usepackage{verbatim}
\usepackage{hyperref}

\usepackage{pst-node}
\usetikzlibrary{patterns}
\usetikzlibrary{arrows,positioning,shapes,fit,calc}

\usepackage{scalerel}
\usepackage{stackengine,wasysym}


 \usepackage[all,cmtip]{xy}

\usetikzlibrary{arrows,positioning,shapes,fit,calc}

\newtheorem{thm}{Theorem}[section]
\newtheorem{lem}[thm]{Lemma}
\newtheorem{prop}[thm]{Proposition}

\newtheorem{defn}[thm]{Definition}

\newcommand{\R}{{\mathbb R}}

\newcommand{\C}{{\mathbb C}}

\newcommand{\End}{\mathrm{End}}
\newcommand{\rk}{\mathrm{rk}}

\setlength\topmargin{0in} \setlength\headheight{0in}
\setlength\headsep{1cm} \setlength\textheight{21cm}
\setlength\textwidth{6in} \setlength\oddsidemargin{0.3cm}
\setlength\evensidemargin{0.3cm}

\selectfont

\pagestyle{headings}

\title{Complex tori constructed from Cayley--Dickson algebras}

\author[I. Grzegorczyk and R. Su\'arez]{Ivona Grzegorczyk and Ricardo Su\'arez}
\address{Dept of Mathematics, California State University Channel Islands, Camarillo, CA, USA}
\email{ivona.grzegorczyk@csuci.edu, ricardo.suarez532@csuci.edu}

\begin{document}

\normalsize

\begin{abstract} 
 In this paper we  construct complex tori, denoted by  $S_{\mathbb{B}_{1,p,q}}$, as quotients of tensor products of Cayley--Dickson algebras, denoted $\mathbb{B}_{1,p,q}=\C\otimes \mathbb{H}^{\otimes p}\otimes \mathbb{O}^{\otimes q}$, with their integral subrings. We then show that these complex tori have endomorphism rings of full rank 
and are isogenous to the direct sum of $2^{2p+3q}$ copies of an elliptic curve $E$ of $j$-invariant $1728$.
\end{abstract}

\maketitle

\section{Introduction }

In the  standard model of quarks and leptons (see \cite{Di1,Di2,Fu}), Cayley--Dickson algebras are used to construct an algebraic model of reality, where the complexified quaternions, complexfied octonions, and the tensor product $\C\otimes_{\R}\mathbb{H}\otimes_{\R} \mathbb{O}$ are all viewed as spaces of spinors.  Motivated by this algebraic model model of reality, we generalize this process to construct  complex tori with nontrivial endomorphism rings isogenous to the direct sum of certain elliptic curves, whose covering spaces are tensor products of Cayley--Dickson algebras. 
\bigskip

For any complex torus $T$ of dimension $n$, the endomorphism ring $\End_{\mathbb{Z}}(T)$  is a free $\mathbb{Z}$-module with the property that $\rk(\End_{\mathbb{Z}}(T))\leq 2\cdot n^2$. When the endomorphism ring is of full rank, we have the following proposition (see \cite{Sh}).

\begin{prop}\label{Decomposition}
Let $T$ be a complex torus of dimension $n$. If the rank of the endomorphism ring is $2 n^2$, then $T$ is isogenous to the direct sum of $n$ copies of an elliptic curve $E$ with complex multiplication.
\end{prop}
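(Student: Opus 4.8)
The plan is to study the rational endomorphism algebra $A := \End^{0}(T) = \End_{\mathbb{Z}}(T)\otimes_{\mathbb{Z}}\mathbb{Q}$, a $\mathbb{Q}$-algebra of dimension $2n^{2}$, by playing its analytic representation against its rational one. Writing $T = V/\Lambda$ with $V\cong\mathbb{C}^{n}$ and $\Lambda$ a lattice of rank $2n$, every endomorphism lifts to a $\mathbb{C}$-linear map of $V$ preserving $\Lambda$, so the analytic representation embeds $\End_{\mathbb{Z}}(T)$ into $\End_{\mathbb{C}}(V)\cong M_{n}(\mathbb{C})$. I would first note that this image is discrete: fixing a $\mathbb{Z}$-basis $\lambda_{1},\dots,\lambda_{2n}$ of $\Lambda$, which is an $\mathbb{R}$-basis of $V$, the assignment $\phi\mapsto(\phi\lambda_{1},\dots,\phi\lambda_{2n})$ injects $\End_{\mathbb{Z}}(T)$ into the discrete set $\Lambda^{2n}$. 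Since $\End_{\mathbb{Z}}(T)$ has rank $2n^{2}=\dim_{\mathbb{R}}M_{n}(\mathbb{C})$, it is a \emph{full} lattice in $M_{n}(\mathbb{C})$, and therefore $A\otimes_{\mathbb{Q}}\mathbb{R}\cong M_{n}(\mathbb{C})$.

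From this isomorphism the ring structure of $A$ is forced. Because $M_{n}(\mathbb{C})$ is simple, so is $A$ (a proper two-sided ideal of $A$ would tensor up to a proper ideal of $A\otimes\mathbb{R}$), and the center $K$ of $A$ satisfies $K\otimes_{\mathbb{Q}}\mathbb{R}\cong Z(M_{n}(\mathbb{C}))=\mathbb{C}$; hence $K$ is an imaginary quadratic field. By Wedderburn's theorem $A\cong M_{m}(D)$ for a central division algebra $D/K$ of some degree $d$, and comparing $\mathbb{Q}$-dimensions gives $2m^{2}d^{2}=2n^{2}$, i.e. $md=n$. To eliminate $D$ I would use the faithful $A$-module $W:=\Lambda\otimes_{\mathbb{Z}}\mathbb{Q}$ of $\mathbb{Q}$-dimension $2n$: every $M_{m}(D)$-module is a sum of copies of the simple module $D^{m}$, of $\mathbb{Q}$-dimension $2md^{2}$, so writing $W\cong (D^{m})^{\oplus s}$ and matching dimensions yields $sd=1$. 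Thus $s=d=1$, giving $D=K$, $m=n$, and $A\cong M_{n}(K)$ with $W\cong K^{n}$ as an $A$-module.

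Finally I would convert the matrix structure into an isogeny decomposition. The diagonal idempotents $E_{ii}\in M_{n}(K)=A$ are projectors in $\End^{0}(T)$, and each cuts out a subtorus $T_{i}=\operatorname{Im}(E_{ii})$ with $T\sim T_{1}\oplus\cdots\oplus T_{n}$ in the isogeny category (the complement of $\operatorname{Im}(E_{ii})$ being $\operatorname{Im}(1-E_{ii})$, so no polarization is needed). Since $E_{ii}W\cong K$ has $\mathbb{Q}$-dimension $2$, each $T_{i}$ has complex dimension one and $\End^{0}(T_{i})=E_{ii}AE_{ii}\cong K$, i.e. $T_{i}$ is an elliptic curve with complex multiplication by $K$. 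The off-diagonal units $E_{ij}$ supply mutually inverse isogenies $T_{j}\to T_{i}$, so all the $T_{i}$ are isogenous to a single CM elliptic curve $E$, and $T\sim E^{n}$. The main obstacle is the first step: extracting the real-algebra isomorphism $A\otimes_{\mathbb{Q}}\mathbb{R}\cong M_{n}(\mathbb{C})$ from the bare rank hypothesis via discreteness and the full-lattice argument. Once the center is pinned down as imaginary quadratic, the rest is Wedderburn bookkeeping together with the standard fact that idempotents in $\End^{0}$ split a complex torus up to isogeny.
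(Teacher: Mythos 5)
Your proof is correct, but there is actually no proof in the paper to compare it with: the paper states Proposition \ref{Decomposition} without proof, quoting it as a known result from Shimizu \cite{Sh}. Your argument is therefore a self-contained replacement for that citation, and it follows what is essentially the standard route. Each step checks out: the image of $\End_{\Z}(T)$ under the analytic representation is discrete in $\End_{\C}(V)\cong M_n(\C)$ (your map $\phi\mapsto(\phi\lambda_1,\dots,\phi\lambda_{2n})$ works because $\End_{\R}(V)\to V^{2n}$ is a linear homeomorphism and $\Lambda^{2n}$ is discrete), and a discrete subgroup of rank $2n^2=\dim_{\R}M_n(\C)$ must span, so $\End^{0}(T)\otimes_{\Q}\R\cong M_n(\C)$ as $\R$-algebras; simplicity and the center descend correctly under extension of scalars to $\R$, so the center $K$ is an imaginary quadratic field; the dimension counts $2m^2d^2=2n^2$ and $2n=s\cdot 2md^2$ do force $s=d=1$, giving $A\cong M_n(K)$ acting on $\Lambda\otimes_{\Z}\Q\cong K^n$; and splitting $T$ by the orthogonal idempotents $E_{ii}$ needs no polarization, since the complementary subtorus is the image of $1-E_{ii}$ --- which, as you note, is exactly why the argument applies to arbitrary complex tori and not merely abelian varieties. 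The one-dimensional factors are pairwise isogenous via the $E_{ij}$ and each carries an action of $K$, hence is a CM elliptic curve, so $T\sim E^{\oplus n}$. One small point of care: the asserted equality $\End^{0}(T_i)=E_{ii}AE_{ii}$ deserves a line of justification (compression of $\End^{0}(T)$ by the idempotent corresponding to a direct factor), but your conclusion only needs the evident inclusion $E_{ii}AE_{ii}\subseteq\End^{0}(T_i)$, which already shows $T_i$ has complex multiplication.
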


In this paper, using the work of Dixon (see  \cite{Di1}) and Furey (see \cite{Fu}), we construct complex tori as in the above proposition with nontrivial endomorphisms  from tensor products of Cayley--Dickson algebras (i.e.\ algebras obtained by the Cayley–-Dickson construction that produces a sequence of algebras over the field of real numbers, each with twice the dimension of the previous one, with suitably modified operations). The simplest examples are complex numbers, quaternions, and octonions, which are useful composition algebras frequently applied in mathematical physics.

We denote our constructed complex torus by $S_{\mathbb{B}_{1,p,q}}$, where $\mathbb{B}_{1,p,q}$ denotes the $\R$ tensor product $\C\otimes \mathbb{H}^{\otimes p}\otimes \mathbb{O}^{\otimes q}$. Here we interpret $\mathbb{B}_{1,p,q}$ as a spinor space for the adjoint algebra of componentwise left and right actions  $\mathbb{B}^{A}_{1,p,q}=\C\otimes_{\R} \mathbb{H}_{A}^{\otimes p}\otimes_{\R} \mathbb{O}_{A}^{\otimes q}$. Note that for any choice of positive integers $(p,q)$,  the  torus $S_{\mathbb{B}_{1,p,q}}$ is of dimension $2^{2p+3q}$. Moreover, we claim that 
$S_{\mathbb{B}_{1,p,q}}$ is a $(\mathbb{B}_{1,p,q}^{A})_{\mathbb{Z}}$-module, giving us the faithful $\mathbb{Z}$-module representations $\rho:(\mathbb{B}_{1,p,q}^{A})_{\mathbb{Z}}\rightarrow \End_{\mathbb{Z}}(S_{\mathbb{B}_{1,p,q}})$.  Hence we are able to conclude that the endomorphism ring of our complex torus is of full rank. Further, we show that $S_{\mathbb{B}_{1,p,q}}$ is a $\mathbb{Z}$-module and is isogenous to the direct sum of $2^{2p+3q}$ copies of an elliptic curve $E$ of $j$-invariant $1728$. Our main result about the construction of the tori is summarized in Proposition \ref{main result}.

\section{Tensor products of Cayley--Dickson algebras and induced complex tori}

This section is inspired by the description  of standard model physics as in  \cite{Di1,Di2,Fu} and the matrix algebra  isomorphisms found in \cite{Po}. As a consequence of Hurwitz's theorem \cite{Hu}, we know that the only  normed division algebras over $\R$ are the Cayley--Dickson algebras  $\R, \C, \mathbb{H}$, and $\mathbb{O}$ (i.e.\ the real and complex numbers, the quaternions, and the octonions). 
These algebras are of dimension  $1,2,4,$ and $8$, respectively, over the real numbers. 
\bigskip

\noindent{\bf Remark.} We denote the tensor product of division algebras of the form $\C\otimes \mathbb{H}^{\otimes p}\otimes \mathbb{O}^{\otimes q}$ by $\mathbb{B}_{1,p,q}$ (where  $\otimes$ denotes $\otimes_{\R}$). As a complex vector space, $\mathbb{B}_{1,p,q}$ has complex dimension $4^{p}\cdot 8^{q}=2^{2p+3q}$.  Additionally, we can view $\mathbb{B}_{1,p,q}$ as the complexification of the real form $\mathbb{B}^{\R}_{1,p,q}=\mathbb{H}^{\otimes p}\otimes_{\R} \mathbb{O}^{\otimes q}$ of real dimension $2^{2p+3q}$. 
\bigskip

In order to construct complex tori out of the above tensor products, we need to consider certain of their subrings, defined below. 

\begin{defn}\label{def: integral subring}
For any tensor product of division algebras $\mathbb{B}_{1,p,q}$, we denote by $\mathbb{B}_{1,p.q}^{\mathbb{Z}}$ 
the integral subring of $\mathbb{B}_{1,p,q}$ obtained by restricting the complex scalars to integral scalars in  $\mathbb{Z}$. 
    
\end{defn}

This integral subring is clearly a free $\mathbb{Z}$-submodule of $\mathbb{B}_{1,p,q}$ under addition; and when we view it as a real algebra, we can view $\mathbb{B}^{\mathbb{Z}}_{1,p,q}$ as $\mathbb{B}^{\mathbb{Z}}_{1,p,q}=\mathbb{B}_{0,p,q}^{\mathbb{Z}}\oplus  i \cdot \mathbb{B}_{0,p,q}^{\mathbb{Z}}$, where $\mathbb{B}_{0,p,q}^{\mathbb{Z}}$ is the integral subring of the real form $\mathbb{B}_{1,p,q}^{\R}$. Note that the integral subring $\mathbb{B}^{\mathbb{Z}}_{1,p,q}$ can also be viewed as the restriction of the ring $\C$ on the first tensor components to the Gaussian integers $\mathbb{Z}[i]$, together with the restriction of every copy of the quaternions and octonions to their integral subrings, denoted $\mathbb{H}_{\mathbb{Z}}$ and $\mathbb{O}_{\mathbb{Z}}$, where all copies are tensors over $\mathbb{Z}$ (that is, we go from $\otimes_{\R}$ to $\otimes_{\mathbb{Z}}$). This allows us to view $\mathbb{B}^{\mathbb{Z}}_{1,p,q}$ as $\mathbb{Z}[i]\otimes_{\mathbb{Z}} \mathbb{H}^{\otimes p}_{\mathbb{Z}}\otimes_{\mathbb{Z}} \mathbb{O}^{\otimes q}_{\mathbb{Z}}$ , and the integral subring of the real form as $\mathbb{B}_{0,p,q}^{\mathbb{Z}}=\mathbb{H}_{\mathbb{Z}}^{\otimes p}\otimes_{\mathbb{Z}} \mathbb{O}_{\mathbb{Z}}^{\otimes q}$. Note that the rank of this $\mathbb{Z}$-module is $2\cdot 4^{p}\cdot 8^{q}=2^{2p+3q+1}$; i.e., it has that many generators. Hence, we can conclude that the integral subring $\mathbb{B}_{1,p,q}^{\mathbb{Z}}$ is a full rank lattice of the complex vector space $\mathbb{B}_{1,p,q}$.  The above can be summarized in the following lemma.

\begin{lem}\label{quotient is a complex torus}
The quotient $S_{\mathbb{B}_{1,p,q}}:=\dfrac{\mathbb{B}_{1,p,q}}{\mathbb{B}_{1,p,q}^{\mathbb{Z}}}$  is a complex torus associated to the tensor product $\mathbb{B}_{1,p,q}$ of complex dimension $2^{2p+3q}$.
\end{lem}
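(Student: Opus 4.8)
The plan is to verify the two defining properties of a complex torus: that $S_{\mathbb{B}_{1,p,q}}$ is the quotient of a complex vector space by a full-rank lattice, and that the resulting quotient carries the structure of a compact complex manifold (equivalently, a complex Lie group). First I would recall from the Remark that $\mathbb{B}_{1,p,q}$ is a complex vector space of complex dimension $2^{2p+3q}$, hence a real vector space of real dimension $2^{2p+3q+1}$. A complex torus of complex dimension $n$ is by definition a quotient $V/\Lambda$, where $V\cong\C^{n}$ and $\Lambda$ is a lattice of full rank $2n$, i.e.\ a discrete subgroup that spans $V$ over $\R$. So it suffices to exhibit $\mathbb{B}_{1,p,q}^{\mathbb{Z}}$ as such a lattice inside $\mathbb{B}_{1,p,q}$.

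The key steps proceed in the order the discussion preceding the lemma already lays out. I would first record that $\mathbb{B}_{1,p,q}^{\mathbb{Z}}$ is a free $\mathbb{Z}$-module under addition, using the identification $\mathbb{B}^{\mathbb{Z}}_{1,p,q}\cong\mathbb{Z}[i]\otimes_{\mathbb{Z}}\mathbb{H}^{\otimes p}_{\mathbb{Z}}\otimes_{\mathbb{Z}}\mathbb{O}^{\otimes q}_{\mathbb{Z}}$; since each tensor factor is $\mathbb{Z}$-free of ranks $2$, $4$, and $8$ respectively, the tensor product is $\mathbb{Z}$-free of rank $2\cdot 4^{p}\cdot 8^{q}=2^{2p+3q+1}$. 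Next I would check discreteness: a $\mathbb{Z}$-basis of $\mathbb{B}_{1,p,q}^{\mathbb{Z}}$, coming from the standard integral bases $\{1,i\}$, the quaternion units, and the octonion units, is also an $\R$-basis of $\mathbb{B}_{1,p,q}$ as a real vector space, so the subgroup it generates is discrete and spans $\mathbb{B}_{1,p,q}$ over $\R$. Having rank exactly $2^{2p+3q+1}=2\cdot\dim_{\C}\mathbb{B}_{1,p,q}$ and spanning over $\R$, the lattice $\mathbb{B}_{1,p,q}^{\mathbb{Z}}$ is therefore full rank, which is precisely the condition already asserted in the paragraph before the lemma.

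Finally I would conclude that the quotient $\mathbb{B}_{1,p,q}/\mathbb{B}_{1,p,q}^{\mathbb{Z}}$ inherits the structure of a compact complex manifold: the complex structure on $\mathbb{B}_{1,p,q}\cong\C^{2^{2p+3q}}$ descends to the quotient because $\mathbb{B}_{1,p,q}^{\mathbb{Z}}$ is an additive subgroup acting by translations, which are holomorphic; compactness follows from full rank of the lattice. This gives a complex torus of the stated dimension $2^{2p+3q}$.

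The main obstacle I anticipate is purely bookkeeping rather than conceptual: one must be careful that the integral bases chosen for the quaternions and octonions (for instance, whether one uses the Lipschitz integers or the Hurwitz order for $\mathbb{H}_{\mathbb{Z}}$) genuinely give a $\mathbb{Z}$-module that is both a subring and a full-rank lattice. Once a concrete integral basis is fixed, verifying that it is simultaneously a $\mathbb{Z}$-basis of the lattice and an $\R$-basis of the ambient real vector space is the crux; everything else is the standard identification of $V/\Lambda$ with a complex torus and follows formally.
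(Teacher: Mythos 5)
Your proposal is correct and follows essentially the same route as the paper: the paper does not give a separate proof of this lemma but instead presents it as a summary of the preceding paragraph, which identifies $\mathbb{B}^{\mathbb{Z}}_{1,p,q}\cong\mathbb{Z}[i]\otimes_{\mathbb{Z}}\mathbb{H}^{\otimes p}_{\mathbb{Z}}\otimes_{\mathbb{Z}}\mathbb{O}^{\otimes q}_{\mathbb{Z}}$ as a free $\mathbb{Z}$-module of rank $2\cdot 4^{p}\cdot 8^{q}=2^{2p+3q+1}$ and hence a full-rank lattice in $\mathbb{B}_{1,p,q}$. Your write-up simply makes explicit the standard details (discreteness, $\R$-spanning, descent of the complex structure, compactness) that the paper leaves implicit, and your caution about fixing a concrete integral basis (Lipschitz versus Hurwitz order) is a reasonable refinement of the paper's terser Definition \ref{def: integral subring}.
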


Before describing natural actions on our complex torus  $S_{\mathbb{B}_{1,p,q}}$, we  define the adjoint algebra of actions on $\mathbb{B}_{1,p,q}$ as follows (see also \cite{Su}).

\begin{defn}\label{def: adjoint algebra}
The \textbf{adjoint algebra of left and right multiplication maps} is defined as  $\mathbb{B}_{1,p,q}^{A}=\{A_{x,y}:x,y\in \mathbb{B}_{1,p,q}\}$,  where $A_{x,y}(a)=R_x\circ L_y(a)=(y\cdot a)\cdot x$ for any $a,x,y\in\mathbb{B}_{1,p,q}$.
  
\end{defn}

Consider the  adjoint algebra  as a tensor algebra defined by  $\mathbb{B}_{1,p,q}^{A}=\mathbb{C}_{A}\otimes \mathbb{H}_{A}^{\otimes p}\otimes \mathbb{O}_{A}^{\otimes q}$, where $\C_{A},\ \mathbb{H}_A,$ and $\mathbb{O}_A$ are the adjoint algebras of left and right actions on $A = \C$, $\mathbb{H}$, or $\mathbb{O}$ respectively. The left and right  actions of  $\mathbb{B}_{1,p,q}^{A}$ on  $\mathbb{B}_{1,p,q}$ are considered componentwise actions on each tensor component by the appropriate adjoint algebra.  
 Now, we have the following isomorphisms for the above adjoint algebras of actions on the complex numbers, quaternions, and octonions described in  \cite{Di2,Fu,LM,Po}: 

\begin{enumerate}
    \item $\C_{A}\cong \C$.
      \item $\mathbb{H}_{A}\cong \mathbb{H}\otimes \mathbb{H}\cong \R(4)$.
    \item $ \mathbb{O}_{A}\cong \R(8)$.
    \item $\mathbb{C}\otimes \mathbb{H}\cong \C(2)$.
\end{enumerate}

\noindent Thus, $\C_{A},\mathbb{H}_{A},$ and $\mathbb{O}_{A}$ are matrix algebras, which makes $\mathbb{B}^{A}_{1,p,q}$ itself into  a matrix algebra described by the following isomorphisms of $\R$ algebras: 
\begin{align*}
\mathbb{B}^{A}_{1,p,q} & = \C\otimes \mathbb{H}_{A}^{\otimes p}\otimes \mathbb{O}_A^{\otimes q} \cong \C\otimes (\mathbb{H}\otimes \mathbb{H})^{p}\otimes \mathbb{O}_A^{\otimes q} \\
& \cong \C\otimes (\R(4))^{\otimes p}\otimes \R(8)^{\otimes q} \cong \C\otimes \R(4^{p})\otimes \R(8^{q}) 
 \cong \C\otimes \R(2^{2p+3q}) \\ & \cong \C(2^{2p+3q}). 
\end{align*}
Next, we can describe actions on our complex torus  $S_{\mathbb{B}_{1,p,q}}$  by restricting the adjoint algebra   $\mathbb{B}_{1,p,q}^{A}$ to the free $\mathbb{Z}$-module $(\mathbb{B}_{1,p,q}^{A})_{\mathbb{Z}}:=\mathbb{Z}[i]\otimes_{\mathbb{Z}} (\mathbb{H}_{\mathbb{Z}}^{A})^{\otimes p}\otimes_{\mathbb{Z}} (\mathbb{O}_{\mathbb{Z}}^{A})^{\otimes q}$. It follows immediately that the adjoint actions in $(\mathbb{B}_{1,p,q}^{A})_{\mathbb{Z}}$ preserve the full rank lattice $\mathbb{B}_{1,p,q}^{\mathbb{Z}}$. These actions extend to the complex torus $S_{\mathbb{B}_{1,p,q}}$, where we have $x_{A}\cdot [y]=[x_{A}\cdot y]$ for any $x_{A}\in (\mathbb{B}_{1,p,q}^{A})_{\mathbb{Z}}$ and $[y]\in S_{\mathbb{B}_{1,p,q}}$. 
Hence we obtain the following lemma.

\begin{lem}
The complex torus $S_{\mathbb{B}_{1,p,q}}$  has nontrivial actions given by the $\mathbb{Z}$-module $(\mathbb{B}^{A}_{1,p,q})_{\mathbb{Z}}$.
\end{lem}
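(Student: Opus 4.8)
The plan is to verify that every element of $(\mathbb{B}^{A}_{1,p,q})_{\mathbb{Z}}$ descends to a holomorphic endomorphism of the complex torus $S_{\mathbb{B}_{1,p,q}} = \mathbb{B}_{1,p,q}/\mathbb{B}_{1,p,q}^{\mathbb{Z}}$, and then to exhibit one such endomorphism that is not the zero map. Writing the torus as $V/\Lambda$ with $V = \mathbb{B}_{1,p,q}$ regarded as a real vector space equipped with the complex structure $J$ given by multiplication by $i$ on the first $\C$ tensor factor, and $\Lambda = \mathbb{B}_{1,p,q}^{\mathbb{Z}}$, an $\R$-linear endomorphism $\phi$ of $V$ descends to an endomorphism of the complex torus precisely when $\phi(\Lambda) \subseteq \Lambda$ and $\phi J = J \phi$. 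I would check these two conditions for the generators of $(\mathbb{B}^{A}_{1,p,q})_{\mathbb{Z}}$ and then extend $\Z$-linearly.

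For lattice preservation, I would note that each generator of $(\mathbb{B}^{A}_{1,p,q})_{\mathbb{Z}}$ is a composition of left and right multiplications $A_{x,y}(a) = (y \cdot a)\cdot x$ by integral elements $x, y$, acting componentwise. Since $\mathbb{B}_{1,p,q}^{\mathbb{Z}} = \mathbb{Z}[i]\otimes_{\mathbb{Z}} \mathbb{H}^{\otimes p}_{\mathbb{Z}}\otimes_{\mathbb{Z}} \mathbb{O}^{\otimes q}_{\mathbb{Z}}$ is a subring, it is closed under multiplication, so $(y\cdot a)\cdot x \in \mathbb{B}_{1,p,q}^{\mathbb{Z}}$ whenever $a\in\mathbb{B}_{1,p,q}^{\mathbb{Z}}$; this is the assertion already recorded before the statement. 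Each $A_{x,y}$ is $\R$-linear in $a$ because multiplication in each Cayley--Dickson factor is $\R$-bilinear, so the generators act by $\R$-linear lattice-preserving maps, and hence so does the whole $\Z$-module they generate.

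The main point is the commutation with the complex structure $J$. Here I would use that the action is componentwise and that $J$ acts only on the first factor. On the first factor the relevant adjoint algebra is $\C_{A} \cong \C$, whose action on $\C$ is by $\mathbb{Z}[i]$-multiplication; since $\C$ is commutative this commutes with multiplication by $i$, that is, with $J$. On each quaternionic or octonionic factor the adjoint operators are $\R$-linear maps that leave the first $\C$ factor untouched, so under the tensor decomposition they commute with $J = J\otimes \mathrm{id}\otimes\cdots\otimes\mathrm{id}$ automatically. Consequently every element of $(\mathbb{B}^{A}_{1,p,q})_{\mathbb{Z}}$ is $\C$-linear, so by the descent criterion above it induces a well-defined holomorphic endomorphism $\rho(x_{A})$ via $x_{A}\cdot[y] = [x_{A}\cdot y]$. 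I expect the non-associativity of the octonions to be the only delicate point: one must make sure the componentwise adjoint operators are genuine, well-defined $\R$-linear endomorphisms, which is exactly what the matrix isomorphisms $\mathbb{H}_{A}\cong\R(4)$ and $\mathbb{O}_{A}\cong\R(8)$ guarantee, since they realize each adjoint algebra as an honest associative matrix algebra acting linearly.

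Finally, for nontriviality I would exhibit an explicit generator acting as a nonzero endomorphism. The element $i\otimes\mathrm{id}^{\otimes p}\otimes\mathrm{id}^{\otimes q}\in(\mathbb{B}^{A}_{1,p,q})_{\mathbb{Z}}$, coming from the Gaussian integer $i\in\mathbb{Z}[i]$ on the first factor, acts as the complex structure $J$, which satisfies $J^{2}=-\mathrm{id}$ and so is neither the identity nor the zero map on the torus; equivalently one may take the adjoint action of a noncentral unit quaternion on one of the $\mathbb{H}$ factors. This shows that $\rho$ is a nonzero representation, so the action of $(\mathbb{B}^{A}_{1,p,q})_{\mathbb{Z}}$ on $S_{\mathbb{B}_{1,p,q}}$ is nontrivial, which is the assertion of the lemma.
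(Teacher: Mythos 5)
Your proof is correct, and at its core it is the same argument the paper gives: restrict the adjoint algebra to its integral form, check that these operators preserve the lattice $\mathbb{B}_{1,p,q}^{\mathbb{Z}}$, and descend through the quotient via $x_{A}\cdot[y]=[x_{A}\cdot y]$. The difference is completeness, and it is worth recording. The paper's entire justification is the paragraph preceding the lemma, which declares lattice preservation ``immediate'' and then asserts that the actions extend to the torus; it nowhere verifies that the adjoint operators commute with the complex structure, and it offers no witness for the word ``nontrivial'' in the statement. You supply both missing pieces: the componentwise observation that operators acting on the $\mathbb{H}$ and $\mathbb{O}$ tensor factors automatically commute with $J=i\otimes\mathrm{id}\otimes\cdots\otimes\mathrm{id}$, while the $\C_{A}$ factor commutes because $\C$ is commutative; and the explicit element $i\otimes\mathrm{id}^{\otimes p}\otimes\mathrm{id}^{\otimes q}$, which acts as $J$ and cannot be any integer multiple of the identity since $J^{2}=-\mathrm{id}$. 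The first of these is not pedantry: commutation with $J$ is precisely what makes the induced maps endomorphisms of the \emph{complex} torus rather than merely of the underlying real torus, and it is what the paper needs later when it bounds $\rk(\End_{\mathbb{Z}}(S_{\mathbb{B}_{1,p,q}}))\leq 2\cdot(2^{2p+3q})^{2}$ and invokes Proposition \ref{Decomposition} --- bounds that hold only for holomorphic endomorphisms (lattice-preserving maps of the real torus would give rank up to $4n^{2}$). Your flagging of octonionic non-associativity, resolved by noting that $\mathbb{O}_{A}\cong\R(8)$ realizes the adjoint operators as honest linear maps, is likewise a point the paper passes over silently.
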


 Now we state our main proposition about the complex tori described in Lemma \ref{quotient is a complex torus}.

\begin{prop}\label{main result}
Fix $(p,q)\in\mathbb{N}^2$. For the complex torus $S_{\mathbb{B}_{1,p,q}}$ we have a faithful \ $\mathbb{Z}$-module representation $\rho:(\mathbb{B}^{A}_{1,p,q})_{\mathbb{Z}}\rightarrow \End_{\mathbb{Z}}(S_{\mathbb{B}_{1,p,q}})$, given by $\rho(x_{A})[y]=[x_{A}\cdot y]$. Hence  our complex torus $S_{\mathbb{B}_{1,p,q}}$ is a $(\mathbb{B}^{A}_{1,p,q})_{\mathbb{Z}}$-module. Moreover,  $S_{\mathbb{B}_{1,p,q}}$  is a complex torus of dimension $2^{2p+3q}$ isogenous to the direct sum of $2^{2p+3q}$ copies of an elliptic curve $E$ of $j$-invariant $1728$.

\end{prop}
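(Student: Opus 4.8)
The plan is to prove the three assertions in sequence: that $\rho$ is a well-defined faithful $\Z$-algebra representation, that this forces $\End_{\Z}(S_{\mathbb{B}_{1,p,q}})$ to have full rank $2n^2$ with $n=2^{2p+3q}$, and finally to identify the CM elliptic curve produced by Proposition \ref{Decomposition}. First I would verify well-definedness. Given $x_A\in(\mathbb{B}^A_{1,p,q})_{\Z}$, the map $a\mapsto x_A\cdot a$ on $\mathbb{B}_{1,p,q}$ is $\R$-linear, and in fact $\C$-linear: the complex structure of $\mathbb{B}_{1,p,q}=\C\otimes_{\R}\mathbb{B}^{\R}_{1,p,q}$ lives in the first tensor slot, on which $\C_A\cong\C$ acts by complex multiplication, whereas the adjoint factors $\mathbb{H}_A,\mathbb{O}_A$ act only on the real form $\mathbb{B}^{\R}_{1,p,q}$ and hence commute with multiplication by $i$. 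As already observed, the integral adjoint actions preserve the lattice $\mathbb{B}^{\Z}_{1,p,q}$, so $a\mapsto x_A\cdot a$ descends to a holomorphic group endomorphism $\rho(x_A)$ of $S_{\mathbb{B}_{1,p,q}}$. Since the adjoint action is additive in $x_A$ and composition of adjoint maps matches multiplication in $(\mathbb{B}^A_{1,p,q})_{\Z}$, the map $\rho$ is a homomorphism of $\Z$-algebras into $\End_{\Z}(S_{\mathbb{B}_{1,p,q}})$; this already gives the $(\mathbb{B}^A_{1,p,q})_{\Z}$-module structure.

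Next I would prove faithfulness. Suppose $\rho(x_A)=0$. Then the $\C$-linear map $a\mapsto x_A\cdot a$ carries the connected space $\mathbb{B}_{1,p,q}$ into the discrete lattice $\mathbb{B}^{\Z}_{1,p,q}$, so by continuity its image is $\{0\}$ and $x_A$ annihilates the vector space $\mathbb{B}_{1,p,q}$. Because $(\mathbb{B}^A_{1,p,q})_{\Z}$ lies inside the matrix algebra $\mathbb{B}^A_{1,p,q}\cong\C(2^{2p+3q})$ acting faithfully on its standard module $\mathbb{B}_{1,p,q}$, this forces $x_A=0$, so $\rho$ is injective.

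To obtain full rank I would count ranks. Using $\C_A\cong\C$, $\mathbb{H}_A\cong\R(4)$, and $\mathbb{O}_A\cong\R(8)$, the free $\Z$-module $(\mathbb{B}^A_{1,p,q})_{\Z}=\Z[i]\otimes_{\Z}(\mathbb{H}^A_{\Z})^{\otimes p}\otimes_{\Z}(\mathbb{O}^A_{\Z})^{\otimes q}$ has rank $2\cdot 16^{p}\cdot 64^{q}=2^{4p+6q+1}=2n^2$. By injectivity, $\End_{\Z}(S_{\mathbb{B}_{1,p,q}})$ contains a free $\Z$-submodule of rank $2n^2$, and since the introduction records the universal bound $\rk(\End_{\Z}(T))\le 2n^2$, equality holds and the endomorphism ring is of full rank. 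Proposition \ref{Decomposition} then furnishes an isogeny $S_{\mathbb{B}_{1,p,q}}\sim E^{\,n}$ for some elliptic curve $E$ with complex multiplication.

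Finally I would pin down $E$. Tensoring the now finite-index inclusion $\rho$ with $\Q$ gives $\End_{\Z}(S_{\mathbb{B}_{1,p,q}})\otimes\Q\cong(\mathbb{B}^A_{1,p,q})_{\Z}\otimes\Q$, while the isogeny $S_{\mathbb{B}_{1,p,q}}\sim E^{\,n}$ gives $\End_{\Z}(S_{\mathbb{B}_{1,p,q}})\otimes\Q\cong M_n(\End_{\Z}(E)\otimes\Q)$, whose center is $\End_{\Z}(E)\otimes\Q$. Hence I need the center of $(\mathbb{B}^A_{1,p,q})_{\Z}\otimes\Q$. Since the rational forms of $\mathbb{H}_A\cong\R(4)$ and $\mathbb{O}_A\cong\R(8)$ are central simple $\Q$-algebras with center $\Q$, the only field extension entering the center is the $\Q(i)$ coming from the $\C_A$ slot restricted to $\Z[i]$; thus the center is $\Q(i)$ and $\End_{\Z}(E)\otimes\Q=\Q(i)$. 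As $\Z[i]$ has class number one and $j(\C/\Z[i])=1728$, every elliptic curve with CM by $\Q(i)$ is isogenous to $\C/\Z[i]$, so $E$ may be taken to be the curve of $j$-invariant $1728$. The hard part is precisely this last step: the decomposition proposition only yields \emph{some} CM elliptic curve, and identifying the CM field as exactly $\Q(i)$ hinges on the matrix-algebra isomorphisms $\mathbb{H}_A\cong\R(4)$ and $\mathbb{O}_A\cong\R(8)$, which guarantee that the quaternionic and octonionic slots contribute only real (central) matrices and no imaginary unit to the center.
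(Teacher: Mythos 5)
Your proposal is correct, and for most of its length it follows the same skeleton as the paper: define $\rho$, check it lands in $\End_{\Z}(S_{\mathbb{B}_{1,p,q}})$, prove injectivity, count the rank of $(\mathbb{B}^{A}_{1,p,q})_{\Z}$ as $2^{4p+6q+1}=2n^2$, invoke the universal bound $\rk(\End_{\Z}(T))\le 2n^2$ to conclude full rank, and apply Proposition \ref{Decomposition}. Two places differ genuinely. First, your faithfulness argument (a $\C$-linear map whose image lies in the discrete lattice must vanish by connectedness, and an element of the adjoint algebra acting as zero on $\mathbb{B}_{1,p,q}$ is zero since the adjoint algebra is by definition an algebra of operators) is tighter than the paper's, which argues via absence of zero divisors in the integral subrings of the associative factors; your version also supplies the holomorphicity ($\C$-linearity) check that the paper leaves implicit. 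Second, and more substantially, the identification of $E$ is done by a different route: the paper transports the order-$4$ endomorphisms coming from the lattice generators of $(\mathbb{B}^{A}_{1,p,q})_{\Z}$ through the isogeny to produce an order-$4$ automorphism of $E$, citing \cite{Su}, whereas you tensor the finite-index inclusion $\rho$ with $\Q$, use $\End_{\Z}(S)\otimes\Q\cong M_n(\End_{\Z}(E)\otimes\Q)$, and compute that the center of $(\mathbb{B}^{A}_{1,p,q})_{\Z}\otimes\Q$ is $\Q(i)$ because the rational adjoint algebras of $\mathbb{H}$ and $\mathbb{O}$ are central simple over $\Q$; this pins down the CM field exactly and then any curve with CM by $\Q(i)$ is isogenous to $\C/\Z[i]$, which has $j=1728$. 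Your center computation is more self-contained and makes explicit why the quaternionic and octonionic slots cannot enlarge the CM field, at the cost of invoking the structure theory of central simple algebras and of $\End^{0}$ under isogeny; the paper's argument is shorter but leans on the external reference and on the somewhat delicate step of carrying automorphisms across an isogeny. (One small remark: your appeal to the class number of $\Z[i]$ is unnecessary --- commensurability of any two lattices with CM by orders in the same imaginary quadratic field already gives the isogeny.)
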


\begin{proof}
For the following isomorphism, we denote the tensor products $\otimes_{\mathbb{Z}}$ as $\otimes$.
\begin{align*}
(\mathbb{B}^{A}_{1,p,q})_{\mathbb{Z}}& =\mathbb{Z}[i]\otimes (\mathbb{H}^{\mathbb{Z}}_{A})^{\otimes p}\otimes (\mathbb{O}^{\mathbb{Z}}_A)^{\otimes q}\cong \mathbb{Z}[i]\otimes  (\mathbb{H}_{\mathbb{Z}}\otimes \mathbb{H}_{\mathbb{Z}})^{\otimes p}\otimes (\mathbb{O}^{\mathbb{Z}}_A)^{\otimes q} \\ & \cong \mathbb{Z}[i]\otimes (\mathbb{Z}(4))^{\otimes p}\otimes \mathbb{Z}(8)^{\otimes q}
\cong\mathbb{Z}[i]\otimes \mathbb{Z}(4^{p})\otimes \mathbb{Z}(8^{q})\cong \mathbb{Z}[i]\otimes \mathbb{Z}(2^{2p+3q}) \\ & \cong\mathbb{Z}(2^{2p+3q})^{\oplus 2}.
\end{align*}
Hence it is immediate that  the rank, as a $\mathbb{Z}$-module, of $(\mathbb{B}^{A}_{1,p,q})_{\mathbb{Z}}$ is $2^{4p+6q+1}$.

We now connect $(\mathbb{B}^{A}_{1,p,q})_{\mathbb{Z}}$ with the endomorphisms of our complex torus via 
the map $\rho:(\mathbb{B}_{1,p,q})_{\mathbb{Z}}\rightarrow \End_{\mathbb{Z}}(S_{\mathbb{B}_{1,p,q}})$, given by $\rho(x_{A})[y]=[x_{A}\cdot y]$, for any adjoint action $x_{A}\in (\mathbb{B}^{A}_{1,p,q})_{\mathbb{Z}}$. This map is clearly an injection, since its kernel is trivial. The triviality of the kernel is immediate since for any generator  $x_{A}\in\mathbb{Z}[i]\otimes_{\mathbb{Z}}\mathbb{H}_{\mathbb{Z}}^{A}\otimes_{\mathbb{Z}} \mathbb{O}_{\mathbb{Z}}^{A}$ chosen as an adjoint action, only the zero element annihilates all elements in the tensor algebra, since $(\mathbb{B}^{A}_{1,p,q})_{\mathbb{Z}}$ is the $\mathbb{Z}$ tensor product of the integral subrings of associative division algebras, where each associative algebra does not carry any zero divisors.  Thus $\rho$ is an injective module homomorphism, i.e.\ a faithful representation of  $\mathbb{Z}$-modules. Given that $\rho:(\mathbb{B}_{1,p,q}^{A})_{\mathbb{Z}}\rightarrow \End_{\mathbb{Z}}(S_{\mathbb{B}_{1,p,q}})$ is a faithful representation of the $\mathbb{Z}$-module $(\mathbb{B}_{1,p,q}^{A})_{\mathbb{Z}}$, we immediately have that $\rho((\mathbb{B}_{1,p,q}^{A})_{\mathbb{Z}})$ is a $\mathbb{Z}$-submodule of $\End_{\mathbb{Z}}(S_{\mathbb{B}_{1,p,q}})$ of rank $2^{4p+6q+1}$. 
Given that $S_{\mathbb{B}_{1,p,q}}$ is a complex torus, using Proposition \ref{Decomposition}, we can estimate the rank of the endomorphism ring as follows:
\[
\rk(\End_{\mathbb{Z}}(S_{\mathbb{B}_{1,p,q}}))\leq 2\cdot (\dim_{\C} S_{\mathbb{B}_{1,p,q}})^2=2\cdot (2^{2p+3q})^2=2^{4p+6q+1}. 
\]
In our case, since $\End_{\mathbb{Z}}(S_{\mathbb{B}_{1,p,q}})$ contains a $\mathbb{Z}$-submodule of rank $2^{4p+6q+1}$, via our faithful representation, we can conclude that the rank of our endomorphism ring is equal to $2^{4p+6q+1}$. Hence, we have an injective faithful representation of $\mathbb{Z}$-modules of equal rank. Therefore, from Proposition \ref{Decomposition}, we know that $S_{\mathbb{B}_{1,p,q}}$ is isogenous to the direct sum of $2^{2p+3q}$ copies of an elliptic curve with complex multiplication; i.e., $S_{\mathbb{B}_{1p,q}}\sim \underbrace{E\oplus \cdots \oplus E}_{2^{2p+3q} \textrm{ times}}$, where $E$ is an elliptic curve with complex multiplication.   The endomorphism ring $\End_{\mathbb{Z}}(S_{\mathbb{B}_{1,p,q}})$ carries endomorphisms of order $1,2$, or $4$ injected by the lattice generators 
 of $(\mathbb{B}^{A}_{1,p,q})_{\mathbb{Z}}$. Hence, extending these actions through the isogeny, it is immediate that $E$ is an elliptic curve that itself carries automorphisms of order $4$; and thus it is an elliptic curve of $j$-invariant $1728$ (see  \cite{Su}).  
\end{proof}

Therefore, we can conclude that for any given choice of $(p,q)$, we have a complex torus, constructed from the tensor product of Cayley--Dickson algebras, isogenous to the direct sum of $2^{2p+3q}$ copies of an elliptic curve $E$ of $j$-invariant $1728$.  We now extend our analysis to define  the analytic representations of our complex tori $S_{\mathbb{B}_{1,p,q}}$. For the endomorphism algebra of the real form  $\mathbb{B}^{\R}_{1,p,q}=\mathbb{B}_{0,p,q}$, we have the following $\R$-endomorphism algebra isomorphisms (see \cite{Ha,LM,Po}):  
\[
\End_{\R}(\mathbb{B}^{\R}_{1,p,q})\cong \End_{\mathbb{R}}(\mathbb{H})^{\otimes p}\otimes_{\mathbb{R}} \End_{\mathbb{R}}(\mathbb{O})^{\otimes q}\cong \R(4)^{\otimes p}\otimes_{\mathbb{R}} \R(8)^{\otimes q}\cong \R(2^{2p+3q}).
\]
Now, by the transitivity of $\R$ algebra isomorphisms, we identify the following real forms isomorphically: $\End_{\R}(\mathbb{B}^{\R}_{1,p,q})\cong \R(2^{2p+3q})\cong (\mathbb{B}_{1,p,q}^{A})^{\R}$. Hence one can easily see that $\End_{\R}(\mathbb{B}_{1,p,q}^{\R})$ is $\R$ algebra of dimension $2^{4p+6q}$. When we complexify the real form, we get a complex matrix algebra of dimension $2^{4p+6q}$; that is, 
\[
\End_{\R}(\mathbb{B}^{\mathbb{R}}_{1,p,q})\otimes_{\R} \C\cong \R(2^{2p+3q})\otimes_{\R} \C\cong \C(2^{2p+3q})\cong \mathbb{B}^{A}_{1,p,q}.
\]
Therefore, we can establish an isomorphism between the complex endomorphisms of the tangent space at the origin of our complex torus, $T_{0}S_{\mathbb{B}_{1,p,q}}=\mathbb{B}_{1,p,q}$,  and the adjoint algebra of actions on the space $\mathbb{B}_{1,p,q}$. Denoting this isomorphism by $f:\mathbb{B}^{A}_{1,p,q}\xrightarrow{\cong} \End_{\C}(\mathbb{B}_{1,p,q})$, we can  then define the \textit{analytic representation} of our complex tori, $\tau_{a}:\End_{\mathbb{Z}}(S_{\mathbb{B}_{1,p,q}})\rightarrow \End_{\mathbb{C}}(\mathbb{B}_{1,p,q})$, via $\tau_{a}(\theta)=f(\rho^{-1}(\theta))$, where we can view $f(\rho^{-1}(\theta))$ as a $2^{2p+3q}\times 2^{2p+3q}$  complex matrix representation.

\end{document}